\title[Bubbles and the two-point Schramm observable]{Some remarks on SLE bubbles and Schramm's two-point Observable }
\author[D. Beliaev]{Dmitry Beliaev}
\address{Dmitry Beliaev\\ 
Department of Mathematics \\
Princeton University\\
Princeton, USA}
\thanks{}
\email{dbeliaev@math.princeton.edu}
\author[F. Johansson Viklund]{Fredrik Johansson Viklund}
\address{Fredrik Johansson Viklund\\
Department of Mathematics\\
Columbia University\\
New York, USA}
\thanks{}
\email{fjv@math.columbia.edu}
\newtheorem*{theorem*}{Theorem}
\newtheorem{Theorem}{Theorem}[section]
\newtheorem{corollary}[Theorem]{Corollary}
\newtheorem{proposition}[Theorem]{Proposition}
\theoremstyle{remark}
\newtheorem*{remark}{Remark}
\newcommand{\ball}{\mathcal{B}}
\newcommand{\HH}{{\mathbb H}}
\renewcommand{\Im}{{\rm Im}}
\newcommand{\ee}{\epsilon}
\let \le \leqslant
\let \ge \geqslant
\numberwithin{equation}{section}
\begin{document}

\begin{abstract}
Simmons and Cardy recently predicted a formula for the probability that the chordal SLE$_{8/3}$ path passes to the left of two points in the upper half-plane. In this paper we give a rigorous proof of their formula. Starting from this result, we derive explicit expressions for several natural connectivity functions for SLE$_{8/3}$ bubbles conditioned to be of macroscopic size. By passing to a limit with such a bubble we construct a certain chordal restriction measure and in this way obtain a proof of a formula for the probability that two given points are between two commuting SLE$_{8/3}$ paths. The one-point version of this result has been predicted by Gamsa and Cardy. Finally, we derive an integral formula for the second moment of the area of an SLE$_{8/3}$ bubble conditioned to have radius $1$. We evaluate the area integral numerically and relate its value to a hypothesis that the area follows the Airy distribution.  \end{abstract}

\maketitle
\section{Introduction and Results}
A range of planar (critical) lattice models from statistical physics have scaling limits that can be described using the Schramm-Loewner evolution (SLE$_\kappa$), a family of conformally invariant random fractal curves. Examples include loop-erased random walks, uniform spanning trees, critical percolation, the Ising model, Gaussian free fields, and, conjecturally, self-avoiding walks. There are also natural variants of SLE that describe random loops arising in these models. Besides being an object of independent interest, the SLE process has provided techniques for a rigorous approach to many aspects of these models. These include---but are certainly not limited to---the passage to the scaling limit itself as well as a method to derive and treat rigorously some of the differential equations that correlation functions in the continuum limit conformal field theory (CFT) satisfy. 

One of the simplest SLE observables is the probability that the curve, or interface, passes to the left of a given point in the upper half plane. Schramm found an explicit expression for this observable and used it to derive a new connectivity function for (the scaling limit of) critical percolation interfaces, see \cite{Sch01}. For the case
$\kappa = 8/3$, which is believed to be the scaling limit of the self-avoiding walk\footnote{More precisely, if the scaling limit of self-avoiding walk is conformally invariant, then it is an SLE$_{8/3}$ curve, see \cite{LSW_SAW} for more details}, Simmons and Cardy predicted an analogous formula
for two points using CFT techniques combined with some intricate complex coordinate changes, see
\cite{SiCa}. One purpose of this note is to give a rigorous proof of
this latter identity and, perhaps more importantly, to bring it to attention to the mathematical community; this is one of very few cases when a two-point correlation for SLE is known explicitly. 

We also give applications of the one- and two-point functions, mainly to variants of SLE$_{8/3}$ {\bf bubbles}. Here, a bubble is a Jordan loop in the upper half-plane ``attached'' to one point on the real line. Such random bubbles are rather natural objects which are known to be closely related to Brownian bubbles, Brownian excursions, and conjecturally to scaling limits of so-called self-avoiding (boundary) polygons, see \cite{LSW_restriction} and \cite{LSW_SAW}. Similar SLE$_{\kappa}$ bubbles with $\kappa \in (8/3,4)$, called pinned loops, were recently studied and used by Sheffield and Werner in the context of conformal loop ensembles. We consider several different probability measures on (hulls of) SLE$_{8/3}$ bubbles of macroscopic size and the explicit one- and two-point functions together with the special restriction property allow us to compute explicit quantities for these objects, see Proposition~\ref{formulas} and its corollaries. These computations then lead to further results: 

The area distribution of the hull of a self-avoiding polygon (and related random loops) has been studied by a number of authors in the mathematical physics literature. Numerical investigation and theoretical considerations have led to predictions that it properly normalized asymptotically follows the Airy distribution, see, e.g., \cite{CR1} and the references therein. (This distribution appears in other related contexts, too; the area under the graph of a one-dimensional Brownian excursion of fixed time length being one of the more well-known examples.) Given the conjectured relationship with self-avoiding boundary polygons, one may wonder whether the Airy distribution can be used to describe the area of the hull of a suitably normalized SLE$_{8/3}$ bubble. Motivated by this and the work by Garban and Trujillo Ferreras in \cite{GTF}, who computed the first moment, we use the two-point function to derive an explicit, albeit complicated, formula for the second moment of the area of the hull of an SLE$_{8/3}$ bubble conditioned to have radius $1$. The hypothesis that this area is Airy distributed combined with the exact value of the first moment implies an exact value of the second moment, providing a way to test the hypothesis. However, the formula for the second moment involves a multiple integral of hypergeometric functions and seems difficult to evaluate exactly. Numerical evaluation supports to some extent the above hypothesis.  

In the last section we consider infinite SLE bubbles. We show that the law of the hull of an SLE$_{8/3}$ bubble conditioned to have large radius  (or to contain a point with large modulus) approaches the chordal restriction measure with exponent $2$, see Proposition~\ref{apr13.1}. Samples from this probability measure can be interpreted in terms of two commuting (mutually avoiding) SLE$_{8/3}$ curves started from the same point. This effectively converts Schramm's observables for one curve to analogous observables for the system of two curves. In particular, we can compute explicitly the probability that two given points are between two commuting SLE$_{8/3}$ paths.
When the points collapse to one, the expression becomes $(4/5)\sin^2(\arg(z))$ and proves a prediction by Gamsa and Cardy \cite{GC}, see Proposition~\ref{gamsa-cardy}. Gamsa and Cardy found this one-point function by solving a third-order differential equation which was derived by CFT methods. We may note that this equation is not of the type that easily translates to SLE language; in particular, it does not come from a so-called level two degeneracy.  
   

To keep the present article short we will not provide a detailed preliminary discussion but instead refer the reader to \cite{RoSch}, \cite{LSW_restriction}, and \cite{GTF} (and the references therein) for further background information. Let us also make the following comment. For convenience we will sometimes casually refer to probability laws conditioned on zero-probability events. (Indeed, we have already.) All such distributions will be defined formally in the text---usually by an appropriate sequence of limits---even though we will not always reference the definition. 
\subsection{Acknowledgements}
Beliaev is partially supported by NSF grant DMS-0758492 and the Chebyshev Laboratory (Faculty of Mathematics and Mechanics, St Petersburg State University) under the grant 11.G34.31.0026 of the Government of the Russian Federation. Johansson Viklund acknowledges support from the Simons Foundation and Institut Mittag-Leffler.
\section{Two-Point Function}

Write $z=x+iy$ and $w=u+iv$ for two points in the upper half-plane $\mathbb{H}$. Let $(g_t)$ be the chordal (forward) SLE$_\kappa$ Loewner chain, that is,
\begin{equation}\label{LE}
\partial_t g_t(z) = \frac{2}{g_t(z) - \sqrt{\kappa}B_t}, \quad 0 < t < \tau(z), \quad g_0(z)=z,
\end{equation}
where $B$ is a standard Brownian motion and $\tau(z)$ is the blow-up time for \eqref{LE}. Let
$z_t:=g_t(z)-\sqrt{\kappa}B_t$, $w_t:=g_t(w)-\sqrt{\kappa}B_t$, and
write $x_t+iy_t=z_t$ and $u_t+iv_t=w_t$. Let $\gamma$ be the standard chordal SLE$_\kappa$ curve in $\mathbb{H}$ so that for each $t \ge 0$
\[
\gamma(t) = \lim_{y \to 0+} g_t^{-1}(iy + \sqrt{\kappa}B_t).
\]
It is a non-trivial fact that $t \mapsto \gamma(t)$ is almost surely a continuous curve (in $\mathbb{H}$ growing from $0$ to $\infty$) which is simple if and only if $0 \le \kappa \le 4$, see \cite{RoSch}.

Define $L_{\kappa}(x,y,u,v)$
to be the probability that $\gamma$ passes to the left of both $z$ and
$w$. More precisely, this should be understood as the harmonic measures
at $z$ and $w$ of the right hand side of the curve in union with the
positive real line both tend to one as $t \to \infty$. (This is the case at least when $\kappa \le 4$. Otherwise we may consider the same type of event as $t \to \max\{\tau(z), \tau(w)\}$.) It is easy to see from the expression for the harmonic measure of the positive real line in $\mathbb{H}$, and proved in Lemma~3 of \cite{Sch01}, that this event is equivalent to $x_t/y_t$ and $u_t/v_t$ both tending to $+\infty$ as $t \to \infty$. We have the following result, which was predicted in \cite{SiCa}.
\begin{Theorem}\label{mainthm}
Let $\gamma$ be the standard chordal SLE$_{8/3}$ path and suppose $z=x+iy, \,w=u+iv \in \mathbb{H}$. Set $\sigma=|z-w|^2/|z-\overline{w}|^2$. Then
\begin{multline}
\mathbb{P} \left\{ \gamma \, \textrm{ passes to the left of both } z \textrm{ and } w \right\}
\\= \left(\frac{1}{2}+\frac{x}{2|z|}\right)\left(\frac{1}{2}+\frac{u}{2|w|}\right)\left(1+\frac{y}{x+|z|}\frac{v}{u+|w|}G(\sigma) \right),
\end{multline}
where 
\begin{equation}\label{G}
G(\sigma)=1-\sigma {}_2 F_1(1, 4/3;5/3; 1-\sigma)
\end{equation}
and ${}_2 F_1$ is the hypergeometric function.
\end{Theorem}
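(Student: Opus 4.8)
The plan is to follow the martingale-observable strategy that Schramm used for the one-point function, by promoting the conjectured expression to a genuine observable along the Loewner flow. Write $F(x,y,u,v)$ for the right-hand side of the asserted identity and set $M_t := F(x_t,y_t,u_t,v_t)$, the candidate evaluated at the images of $z$ and $w$ under the centered flow. By the domain Markov property and the conformal invariance of chordal $\mathrm{SLE}_{8/3}$, the conditional probability of the target event given $\gamma[0,t]$ equals $L_{8/3}$ evaluated at the pair $(z_t,w_t)$, so the true observable is automatically a bounded martingale. My goal is therefore to show that the \emph{candidate} $M_t$ is itself a bounded martingale converging almost surely to the indicator of the event, for then
\begin{equation*}
F(z,w) = \mathbb{E}[M_0] = \mathbb{E}\big[\lim_{t\to\infty} M_t\big] = \mathbb{P}(\gamma \text{ passes left of both}) = L_{8/3}(z,w).
\end{equation*}

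First I would derive the equation characterizing the martingale property. Using $dz_t = (2/z_t)\,dt - \sqrt{\kappa}\,dB_t$ together with the fact that $w_t$ is driven by the \emph{same} Brownian motion, Itô's formula gives $dM_t = (\text{martingale part}) + (\mathcal{A}F)(x_t,y_t,u_t,v_t)\,dt$, where with $\kappa = 8/3$,
\begin{equation*}
\mathcal{A} = \frac{\kappa}{2}\left(\partial_x^2 + 2\partial_x\partial_u + \partial_u^2\right) + \frac{2x}{x^2+y^2}\partial_x - \frac{2y}{x^2+y^2}\partial_y + \frac{2u}{u^2+v^2}\partial_u - \frac{2v}{u^2+v^2}\partial_v.
\end{equation*}
The cross term $2\partial_x\partial_u$ is the only genuinely two-point feature; it encodes that $z_t$ and $w_t$ feel the same noise. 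Thus $M_t$ is a local martingale exactly when $\mathcal{A}F = 0$, and verifying this identity is the core of the argument.

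To verify $\mathcal{A}F=0$ I would first reduce the number of variables. The automorphisms of $\mathbb{H}$ fixing $0$ and $\infty$ are the dilations $z\mapsto \lambda z$, so conformal invariance forces $F$ to depend only on the three scale-invariant coordinates $\theta=\arg z$, $\phi=\arg w$, and $\sigma=|z-w|^2/|z-\bar w|^2$. Indeed, since $x/|z|=\cos\theta$ and $y/(x+|z|)=\tan(\theta/2)$ (and likewise for $w$), the asserted formula reads $F = \cos^2(\theta/2)\cos^2(\phi/2)\big(1+\tan(\theta/2)\tan(\phi/2)\,G(\sigma)\big)$. The two prefactors are precisely the one-point functions $L_1(z)$ and $L_1(w)$, each of which already satisfies its one-variable martingale equation; a short computation then shows that the only obstruction to the product $P=L_1(z)L_1(w)$ being a martingale is the drift $\mathcal{A}P = \kappa\,(\partial_x L_1(z))(\partial_u L_1(w))$ produced by the common noise. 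Substituting the ansatz $F = P\cdot(1+\tan(\theta/2)\tan(\phi/2)\,G(\sigma))$ into $\mathcal{A}F=0$ and passing to the coordinates $(\theta,\phi,\sigma)$, I expect the $\theta$- and $\phi$-dependence to factor out completely, collapsing the equation to a single second-order linear ODE for $G$. I anticipate this reduction, and the subsequent check that $G(\sigma)=1-\sigma\,{}_2F_1(1,4/3;5/3;1-\sigma)$ solves it, to be \textbf{the main obstacle}: it requires the change of variables to be carried out carefully and the resulting ODE to be recognized as a transform of the hypergeometric equation with parameters $(1,4/3;5/3)$, via the standard contiguous relations and the Gauss hypergeometric equation. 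The specific value $\kappa=8/3$ is exactly what produces these parameters.

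It remains to address boundedness and the terminal value. Boundedness of $F$ follows once $G$ is seen to be bounded on $(0,1)$: as $\sigma\to 0$ the prefactor $\sigma$ cancels the $\sigma^{-2/3}$ singularity of the hypergeometric function, giving $G(0)=1$, while $G(1)=0$; hence $M_t$ is a true bounded martingale. For the terminal value I would invoke the standard fact (see \cite{RoSch}) that for $\kappa\le 4$ no point is swallowed and $\arg z_t$ converges almost surely to $0$ or to $\pi$ according to whether $\gamma$ passes to the left or the right of $z$ (this is the event characterization $x_t/y_t\to+\infty$ recalled above), and similarly for $w$. Evaluating $F$ in the four limiting regimes gives $F\to 1$ when both arguments tend to $0$ and $F\to 0$ otherwise; in the mixed cases the apparent indeterminacy is harmless because $\cos^2(\phi/2)\tan(\phi/2)=\tfrac12\sin\phi\to 0$ and $G$ stays bounded. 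Therefore $M_t\to\mathbf{1}\{\gamma\text{ passes left of both}\}$ almost surely, and bounded convergence yields the claimed identity $F(z,w)=L_{8/3}(z,w)$.
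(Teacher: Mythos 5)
Your overall strategy is exactly the paper's: promote the Simmons--Cardy ansatz to a candidate observable, show it is a bounded martingale with the correct terminal value, and conclude by optional stopping. Your generator $\mathcal{A}$ is the paper's $\Lambda$, your identification of the cross term $\kappa(\partial_x L_1(z))(\partial_u L_1(w))$ as the sole obstruction to the product of one-point functions being a martingale is correct, and your handling of the logical subtlety (never assuming the true two-point function is $\mathcal{C}^2$, only verifying the explicit candidate) and of the terminal value analysis matches the paper.

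The gap sits precisely at the step you flag as ``the main obstacle'': you neither derive the equation for $G$ nor verify that $G(\sigma)=1-\sigma\,{}_2F_1(1,4/3;5/3;1-\sigma)$ solves it, and this is where the entire content of the theorem lives --- without it you have shown only that \emph{some} function of $\sigma$ completes the ansatz, not the stated formula. Moreover your prediction of the equation's form is off. The structural fact you are missing is that $(\partial_x+\partial_u)\sigma=0$, since $\sigma$ depends on $x$ and $u$ only through $x-u$; hence $(\partial_x+\partial_u)^2H(\sigma)=0$ for every $\mathcal{C}^2$ function $H$, the term $G''$ never appears, and (using the identity $4yv/((x-u)^2+(y+v)^2)=1-\sigma$) the drift equation collapses not to a second-order hypergeometric equation but to the \emph{first-order} linear inhomogeneous ODE
\[
t-1+(t+1)\,G(t)-3t(1-t)\,G'(t)=0 .
\]
Its general solution is $G(t)=1+t\,{}_2F_1(1/3,2/3;5/3;t)\,(1-t)^{-2/3}+C\,t^{1/3}(1-t)^{-2/3}$; every choice of $C$ gives $G(0)=1$, and the condition $G(1)=0$ is imposed by rewriting the hypergeometric function via Kummer's connection formulas at $t=1$ and taking $C=-\Gamma(2/3)\Gamma(5/3)/\Gamma(4/3)$, which yields exactly \eqref{G}. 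Supplying this computation (or an equivalent direct verification that the stated $G$ satisfies the first-order ODE with the right boundary values) is what is needed to close the argument.
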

\begin{remark}
The one-point function for $\kappa=8/3$, with the same notation as in the theorem, is given by
\[
\mathbb{P} \left\{ \gamma \, \textrm{ passes to the left of } z \right\}
= \frac{1}{2}+\frac{x}{2|z|}.
\]
\end{remark}
\begin{remark}
In a similar manner as above one can define probabilities of the remaining three non-trivial outcomes, giving three additional two-point functions. The formulas for these probabilities could be
proved in exactly the same way as Theorem~\ref{mainthm}, or could be derived from it using Schramm's
one-point formula as given in the previous remark. 
\end{remark}
\begin{proof}[Proof of Theorem~\ref{mainthm}]
Set $\kappa = 8/3$ and write $L=L_{8/3}$.
By the conformal Markov property of SLE we have
\[
\mathbb{E}[L(x,y,u,v) | \mathcal{F}_t] = L(x_t,y_t,u_t,v_t)
\]
so that $t \mapsto L(x_t,y_t,u_t,v_t)$ is an invariant SLE martingale. ($\mathcal{F}_t$ is the filtration generated by the driving Brownian motion.)
Hence if
\[
\Lambda=\frac{2x}{x^2+y^2} \partial_x+\frac{-2y}{x^2+y^2} \partial_y +\frac{2u}{u^2+v^2} \partial_u+\frac{-2v}{u^2+v^2} \partial_v 
+ \frac{\kappa}{2} \left( \partial_{x} + \partial_u \right)^2
\]
then, assuming for the moment that $L \in \mathcal{C}^2$, we see from \eqref{LE} using It\^o's formula that the martingale property implies that
\begin{equation}
\label{martingale}
\Lambda L= 0.
\end{equation}
Following Simmons and Cardy, we consider now the auxiliary variable $\sigma=\sigma(z,w)$ defined by
\[
\sigma=\frac{|z-w|^2}{|z-\overline{w}|^2}=\frac{(x-u)^2+(y-v)^2}{(x-u)^2+(y+v)^2},
\]
which is the exponential of (a multiple of) the usual Green function for the half-plane, 
the basic conformal invariant for configurations with two interior points.
We have that $\left( \partial_{x} + \partial_u \right)^2H(\sigma)=0$ for
any $\mathcal{C}^2$ function $H$ and that $\sigma=0$ corresponds to $z=w$, that is,
the fully correlated case. The value $\sigma =1$ corresponds to at
least one point being on the real line or one point being sent to infinity. In these cases we are back to Schramm's original problem. Let $L(x,y)=L(x,y,x,y)$ be the one-point left-passage probability. We search for a solution of \eqref{martingale} in the form
\begin{equation}\label{apr29.1}
L_{}(x,y)L_{}(u,v)\left(1+\frac{\sqrt{1-L_{}(x,y)}\sqrt{1-L_{}(u,v)}}{\sqrt{L_{}(x,y)}\sqrt{L_{}(u,v)}}G(\sigma)\right),
\end{equation}
where we require that 
\[
\lim_{\sigma \to 0+}G(\sigma)=1, \quad \lim_{\sigma \to 1-}G(\sigma)=0.
\]
This ansatz (which should be credited to Simmons and Cardy) is quite natural. Indeed, the two-point function
is a conformal invariant for configurations with two boundary points and two interior points. Moreover, we expect it to look similar to the product of two one-point functions,
to be symmetric with respect to the interior points, to reduce
to a one-point function when the interior points collapse to one, and to reduce to the product of the one-point functions when one point is on the real line (or is sent to infinity). We also expect the renormalized two-point function for the probability that the path passes between two points to behave as the SLE Green function when the points are collapsed to one, see the remark below. The above ansatz is arguably the simplest expression satisfying these conditions together with the requirements on conformal invariance. (Of course, in general, there is no reason to expect a conformal invariant for a four-point configuration as above to ``factorize'' into simpler conformal invariants.) 

From \cite{Sch01} we have
\[
L(x,y)=\frac{1}{2}\left(1+\frac{x}{\sqrt{x^2+y^2}}\right).
\]

Plugging this into \eqref{apr29.1} gives an ansatz for $L(x,y,u,v)$. Using the identity 
\[\frac{4yv}{(x-u)^2+(y+v)^2}=1-\sigma\]
the equation $\Lambda L(x,y,u,v)=0$ now implies that $G$ must satisfy a hypergeometric ODE, namely
\[
t-1 + (t+1) G(t) - 3 t (1-t) G'(t)=0.
\]
The general solution to this equation is given by
\[
G(t)=1 +\frac{t {}_2 F_1(1/3,2/3;5/3;t)}{(1-t)^{2/3}}+C\frac{t^{1/3}}{(1-t)^{2/3}},
\]
where $C$ is a constant.
For any value of $C$ the solution is equal to $1$ at $t=0$. To fit the boundary value at $t=1$ we can rewrite the hypergeometric function using Kummer's solutions. We have that
\begin{multline*}
{}_2 F_1(1/3,2/3;5/3;t)=\frac{\Gamma(-2/3)\Gamma(5/3)}{\Gamma(1/3)\Gamma(2/3)}(1-t)^{2/3}{}_2 F_1(4/3,1;5/3;1-t)\\+
\frac{\Gamma(2/3)\Gamma(5/3)}{\Gamma(4/3)\Gamma(1)}{}_2 F_1(1/3,2/3;1/3;1-t),
\end{multline*}
and this simplifies to
\[
-(1-t)^{2/3}{}_2 F_1(4/3,1;5/3;1-t)+
\frac{\Gamma(2/3)\Gamma(5/3)}{\Gamma(4/3)\Gamma(1)}t^{-2/3}.
\]
Plugging this expression into the general solution gives
\[
G(t)=1-t {}_2 F_1(4/3,1;5/3;1-t) + \frac{\Gamma(2/3)\Gamma(5/3)}{\Gamma(4/3)\Gamma(1)}\frac{t^{1/3}}{(1-t)^{2/3}}+C\frac{t^{1/3}}{(1-t)^{2/3}}
\]
which is equal to zero at $t=1$ if and only if
\[
C=-\frac{\Gamma(2/3)\Gamma(5/3)}{\Gamma(4/3)\Gamma(1)}
\]
and the last two terms in the expression for $G$ cancel out and we get \eqref{G}. 

With this definition of $G$ we have thus found a solution which we can write as
\[
\tilde{L}:=L(x,y)L(u,v)+\sqrt{L(x,y)}\sqrt{L(u,v)}\sqrt{1- L(x,y)}\sqrt{1-L(u,v)}G(\sigma).
\]


It remains to show that $\tilde{L}(x,y,u,v)=L(x,y,u,v)$. (We do no
longer assume that $L(x,y,u,v)$ is $\mathcal{C}^2$.) To this end, we observe that It\^o's
formula implies that $\tilde{L}_t=\tilde{L}(x_t,y_t,u_t,v_t)$ is a
local martingale and it is clear that it is in fact a martingale. Since the path has zero probability of hitting any of the
two points it follows that $\lim_{t \to
  \infty}\tilde{L}_t \in \{0,1\}$  with probability
one. A calculation shows that the limit is equal to $1$ if and
only if both $x_t/y_t$ and $u_t/v_t$ tend to $+\infty$ as $t \to
\infty$. The probability of this event is by definition $L(x,y,u,v)$. Clearly $\tilde{L}(x,y,u,v)$ is bounded when ${|z| \to 0}$ (or $|w| \to 0$, or both). Consequently, $\tilde{L}_t$ is uniformly integrable and we get that
\[
\tilde{L}(x,y,u,v)=\mathbb{E}[\lim_{t \to \infty}\tilde{L}_t]=L(x,y,u,v)
\] 
and this concludes the proof.
\end{proof}
\section{Application to Bubbles and Multiple SLE }
We will now apply Theorem~\ref{mainthm} to derive several expressions for connectivity functions and quantities related to (hulls of) SLE$_{8/3}$ bubbles to be defined below. In particular, we will find an expression for the second moment of the area of the hull of an SLE bubble 
conditioned to have radius $1$ and we will use a version of the SLE$_{8/3}$ bubble to give a new construction of (the hull of) a pair of commuting SLE$_{8/3}$ paths. Combined with Theorem~\ref{mainthm}, this construction leads to a simple proof of a prediction by Gamsa and Cardy. 

\subsection{Bubbles}\label{bb}
Let us construct the SLE$_{8/3}$ bubble (pinned at $0$). We will use the M\"obius transformations 
\[
F_{\ee}(z)=\frac{z}{\ee-z}, \quad F_{\ee}^{-1}(z)=\frac{\ee z}{z+1},
\]
so that $F_{\ee}$ maps $\HH$ onto $\HH$ mapping $\ee$ to $\infty$ while fixing $0$.
Let $w \in \mathbb{H}$. We define the (hull of the) \textbf{SLE bubble with bulk point $w$} by considering the closure of the bounded component of the complement of the chordal SLE path from $0$ to $\ee >0$ conditioned to pass to the left of $w$ (see below) and then letting $\ee \to 0$. The existence of the limit follows, e.g., from arguments along the lines of \cite{SW10} Section 6.2; see also below. It is convenient to have a name for the closure of the bounded component of the complement of the chordal SLE path from $0$ to $\ee$ and we will call it an \textbf{SLE $\ee$-bubble} and write $P_{\ee}^{\textrm{SLE}}$ for its law. Thus, the SLE bubble with bulk point $w$ is the limit as $\ee \to 0$ of an $\ee$-bubble conditioned to contain $w$. The event that a point $w$ is contained in an $\ee$-bubble is formally defined as the event that the standard SLE path ``generating'' the bubble passes to the left of $F_{\ee}(w)$. We will also consider similar bubbles conditioned to have radius at least $0<R< \infty$, defined by conditioning the $\ee$-bubble on hitting the circle around the origin of radius $R$ and then letting $\ee \to 0$.  

Note that we work with probability measures on SLE bubbles; the SLE$_{8/3}$ and Brownian bubble measures considered in, e.g., Section 7 of \cite{LSW_restriction}, are $\sigma$ finite infinite measures on bubbles attached to the origin. (The mass of bubbles of radius at most $r$ blows up like $r^{-2}$ as $r \to 0$.) The SLE$_{8/3}$ bubble (non-probability) measure $\mu^{\textrm{SLE}}$ is constructed by renormalizing the law of an $\ee$-bubble by $\ee^{-2}$ and passing to the limit. Similarly, the Brownian bubble (non-probability) measure $\mu^{\textrm{B}}$ can be constructed by considering the filling of a Brownian excursion in $\mathbb{H}$ from $0$ to $\ee$ and renormalizing its law by $\ee^{-2}$. (The filling of a closed bounded set $S \subset \overline{\mathbb{H}}$ is the closure of the union of $S$ with the bounded components of $\mathbb{H} \setminus S$.) These two measures are very closely related as $\mu^{\textrm{SLE}} = (5/8) \mu^{\textrm{B}}$, see \cite{LSW_restriction}. We can construct probability measures from these measures by restricting to a set of strictly positive and finite measure and then renormalizing. For example, by similar computations as below, the $\mu^{\textrm{SLE}}$-measure of $E_R$, the set of bubbles of radius at least $0<R< \infty$, is strictly positive and finite. This gives an alternative construction of the probability measure on bubbles of radius at least $R$:
\[
\frac{\mu^{\textrm{SLE}}(\, \cdot \cap E_R)}{\mu^{\textrm{SLE}}(E_R)}=
\frac{\lim_{\ee \to 0}\ee^{-2}P_{\ee}^{\textrm{SLE}}(\, \cdot \cap E_R )}{\lim_{\ee \to 0}\ee^{-2}P_{\ee}^{\textrm{SLE}}(E_R) } = \lim_{\ee \to 0} \frac{P_{\ee}^{\textrm{SLE}}(\, \cdot \cap E_R)}{P_{\ee}^{\textrm{SLE}}(E_R)} =\lim_{\ee \to 0} P_{\ee}^{\textrm{SLE}}(\, \cdot \mid E_R). 
\]
Using that $\mu^{\textrm{SLE}} = (5/8) \mu^{\textrm{B}}$, we see that the conditioned SLE$_{8/3}$ probability measure also equivalently describes Brownian bubbles.

We will begin by computing probabilities of several events for $\ee$-bubbles. In what follows, $z$ and $w$ are two fixed points in $\mathbb{H}$, $R>\max\{|z|,\,|w|\}$, and $C_R=\partial D_R \cap \mathbb{H}$ where $D_R$ is the disk of radius $R$ centered at the origin. We also assume that $\ee$ is much smaller than all other parameters. Recall also that $\sigma(z,w)=|z-w|^2/|z- \overline{w}|^2$ and that the function $G$ was defined in \eqref{G}.

\begin{proposition}\label{formulas}
Let $R$, $z$, $w$, and $\epsilon$ be as above. Then the following statements hold.
\begin{enumerate}[(a)]
\item
The probability that $z$ is inside the SLE$_{8/3}$ $\ee$-bubble is given by
\begin{equation}
\label{Pz}
P(z)=\frac{1}{4}\left(\Im\left(\frac{1}{z}\right)\right)^2\ee^2+o(\ee^2).
\end{equation}
\item
The probability that the two points $z$ and $w$ are inside the SLE$_{8/3}$ $\ee$-bubble is given by
\begin{equation}
\label{Pzw}
P(z,w)=\frac{1}{4}\Im\left(\frac{1}{z}\right)\Im\left(\frac{1}{w}\right)G(\sigma(z,w))\ee^2+o(\ee^2).
\end{equation}


\item
The probability that the SLE$_{8/3}$ $\ee$-bubble is inside $D_R$ and contains $z$ is given by
\begin{equation}
\label{PRz}
P_R(z)=\frac{\ee^2}{4 R^2}\left(\Im \left(J \left( \frac{z}{R} \right) \right)\right)^2+o(\ee^2),
\end{equation}
where $J(z)=z+z^{-1}$ is the Joukowsky map.
\item
The probability that the SLE$_{8/3}$ $\ee$-bubble is inside $D_R$ and contains the two points $z$ and $w$ is given by
\begin{equation}
\label{PRzw}
P_R(z,w)=\frac{\ee^2}{4R^2}\Im \left(J \left(\frac{z}{R} \right) \right)\Im \left(J \left(\frac{w}{R} \right) \right)G\left(\sigma \left(J \left(\frac{z}{R} \right),J \left(\frac{w}{R} \right)\right)\right)+o(\ee^2),
\end{equation}
where $J(z)=z+z^{-1}$ is the Joukowsky map.
%
\end{enumerate}

\end{proposition}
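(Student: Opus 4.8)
The plan is to reduce each event to a left-passage event for the standard chordal SLE$_{8/3}$ curve $\gamma$ from $0$ to $\infty$ and then apply the one- and two-point formulas together with a Taylor expansion in $\epsilon$. Recall that the $\epsilon$-bubble is generated by $\gamma$ through the M\"obius map $F_\epsilon$, which sends $\HH$ to $\HH$, $0$ to $0$ and $\epsilon$ to $\infty$; by definition a point $p\in\HH$ lies inside the $\epsilon$-bubble precisely when $\gamma$ passes to the left of $F_\epsilon(p)$. Hence parts (a) and (b) are simply the one- and two-point functions evaluated at $F_\epsilon(z)$ (and $F_\epsilon(w)$). For parts (c) and (d) there is the additional constraint that the bubble stay inside $D_R$; this is exactly an avoidance event for $\gamma$, and here I would invoke the restriction property, valid precisely because $\kappa=8/3$.

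For (a) and (b) I would use the expansion $F_\epsilon(z)=z/(\epsilon-z)=-1-\epsilon/z+O(\epsilon^2)$, so that $F_\epsilon(z)=X+iY$ with $X=-1-\epsilon\,\Re(1/z)+O(\epsilon^2)$ and $Y=-\epsilon\,\Im(1/z)+O(\epsilon^2)$; in particular $F_\epsilon(z)$ sits just above $-1$ with imaginary part of order $\epsilon$. Plugging into the one-point function $1/2+X/(2|F_\epsilon(z)|)$ and expanding gives $P(z)=Y^2/(4X^2)+o(\epsilon^2)=\tfrac14(\Im(1/z))^2\epsilon^2+o(\epsilon^2)$, which is (a). For (b) I would substitute $F_\epsilon(z),F_\epsilon(w)$ into the formula of Theorem~\ref{mainthm}, using that $\sigma$ is a conformal invariant so $\sigma(F_\epsilon(z),F_\epsilon(w))=\sigma(z,w)$. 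The bookkeeping here is the one point that needs care: each one-point factor tends to $0$ like $Y_i^2/4$, while the factor $Y_i/(X_i+|F_\epsilon(\cdot)|)$ blows up like $2/Y_i$ because $X_i+|F_\epsilon(\cdot)|\to -1+1=0$ at rate $Y_i^2/2$. Consequently the term containing $G$ is the one that survives at order $\epsilon^2$, giving $\tfrac14 Y_1Y_2\,G(\sigma)+o(\epsilon^2)$, whereas the product of the two one-point factors is $O(\epsilon^4)$; this yields (b).

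For (c) and (d) I would first transport everything through $F_\epsilon$: the event ``bubble $\subset D_R$'' becomes ``$\gamma$ avoids the hull $K_\epsilon:=F_\epsilon(\HH\setminus D_R)$'', a small hull clustered near $-1$ of half-plane capacity $O(\epsilon^2)$. By the restriction property, conditionally on $\{\gamma\cap K_\epsilon=\emptyset\}$ the image $\Phi_{K_\epsilon}(\gamma)$ is again a standard SLE$_{8/3}$, where $\Phi_{K_\epsilon}:\HH\setminus K_\epsilon\to\HH$ is the hydrodynamically normalized uniformizing map fixing $0$; moreover $\Prob(\gamma\cap K_\epsilon=\emptyset)=\Phi_{K_\epsilon}'(0)^{5/8}$. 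Since a conformal map of $\HH$ preserves the left/right side of the curve, this gives $P_R(z)=\Phi_{K_\epsilon}'(0)^{5/8}\,L\big(\Psi_\epsilon(z)\big)$ and $P_R(z,w)=\Phi_{K_\epsilon}'(0)^{5/8}\,L\big(\Psi_\epsilon(z),\Psi_\epsilon(w)\big)$, where $\Psi_\epsilon:=\Phi_{K_\epsilon}\circ F_\epsilon$ and $L$ denotes the one-, resp.\ two-point, function. The key observation is that $\Psi_\epsilon$ is nothing but the conformal map of the half-disk $\HH\cap D_R$ onto $\HH$ fixing $0$ and sending $\epsilon$ to $\infty$; composing the reference map $-J(\cdot/R)$ with a M\"obius transformation and matching the residue at $\epsilon$ gives the explicit form
\[
\Psi_\epsilon(z)=\frac{R/\epsilon}{J(z/R)-J(\epsilon/R)}=-1-\frac{\epsilon}{R}J(z/R)+O(\epsilon^2).
\]
Because $\mathrm{hcap}(K_\epsilon)=O(\epsilon^2)$ we have $\Phi_{K_\epsilon}'(0)^{5/8}=1+O(\epsilon^2)$, and since $L(\Psi_\epsilon(z))=O(\epsilon^2)$ this prefactor only affects the $o(\epsilon^2)$ error. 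Feeding $\Psi_\epsilon(z)=-1-\tfrac{\epsilon}{R}J(z/R)+O(\epsilon^2)$ into the expansions from (a) and (b) then produces (c) and (d), the only new input being that $\sigma(\Psi_\epsilon(z),\Psi_\epsilon(w))\to\sigma(J(z/R),J(w/R))$ as $\epsilon\to0$ (here $\Psi_\epsilon$ is not a M\"obius map of $\HH$, so $\sigma$ is not preserved and one computes the limit from the expansion). As a check, letting $R\to\infty$ recovers (a) and (b).

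The main obstacle, beyond the routine expansions, is making the conformal transfer rigorous: justifying that ``$z$ inside the bubble'' and ``bubble $\subset D_R$'' correspond under $F_\epsilon$ exactly to the stated left-passage and avoidance events (modulo probability-zero boundary cases), and that the restriction property may be applied with the error term $\Phi_{K_\epsilon}'(0)^{5/8}=1+O(\epsilon^2)$ controlled uniformly for $z,w$ in a fixed compact subset of $\HH\cap D_R$. The remaining delicate point is the identification and expansion of $\Psi_\epsilon$ together with the near-cancellation in the two-point factor that I isolated in (b); once these are in place, (a)--(d) follow by substitution.
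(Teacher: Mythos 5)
Your proposal follows essentially the same route as the paper: reduce containment events to left-passage events for the standard chordal curve via $F_\ee$, Taylor-expand the one- and two-point functions near $-1$, and for (c), (d) use the restriction property together with the identification of the uniformizing map of the half-disk $\HH\cap D_R$ (sending $0\mapsto 0$, $\ee\mapsto\infty$) through the Joukowsky map, with $|\psi'(0)|^{5/8}\to 1$ absorbed into the error term. The only cosmetic differences are that the paper works with $L(z)=\cos^2(\arg(z)/2)$ and argument expansions rather than real coordinates, and obtains $\sigma(\psi(F_\ee(z)),\psi(F_\ee(w)))=\sigma(J(z/R),J(w/R))$ exactly by M\"obius invariance rather than as a limit from the expansion.
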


\begin{proof}
The proofs of all these statements are straight-forward Taylor series computations involving the one- and two-point functions which are now convenient to write in complex form:
\begin{align}
\label{Lcomplex}
L(z)&=\cos^2(\arg(z)/2),
\\
\label{LLcomplex}
L(z,w)&=\cos^2(\arg(z)/2)\cos^2(\arg(w)/2) + \frac{1}{4}\sin(\arg(z))\sin(\arg(w))G(\sigma(z,w)).
\end{align}
(Note that this differs from our previous notation.)
Let us start with the first formula. By conformal invariance of SLE,
$P(z)=L(F_\epsilon(z))$. For small $\ee>0$
\[
F_\ee(z)=-1-\frac{\ee}{z}+O(\ee^2)
\]
and
\[
\arg(F_\ee(z))=\pi+\ee\Im(z^{-1})+O(\ee^2)
\]
plugging this into \eqref{Lcomplex} we get \eqref{Pz}.

Similarly,
\[
P(z,w)=L(F_{\ee}(z), F_{\ee}(w)) =  \ee^2\frac{1}{4}\Im(z^{-1})\Im(w^{-1}) G\left(\sigma(F_{\ee}(z), F_{\ee}(w)) \right)+O(\ee^4). 
\]
Note that by M\"obius invariance
\[
\sigma(F_{\ee}(z), F_{\ee}(w)) = \sigma(z,w),
\]
and this completes the proof of \eqref{Pzw}.

To prove the last two formulas we have to introduce more notation. The image of $C_R$ under $F_\ee$ is the semi-circle of radius $\rho$ centered at $-a$, where
\[
\rho=\frac{\ee R}{R^2-\ee^2}
\] 
and
\[
a=\frac{R^2}{R^2-\ee^2}.
\]
Let us denote the corresponding half-disc by $\ball$. 
By the restriction property, the law of a standard chordal SLE$_{8/3}$ curve conditioned to avoid $\ball$ is the same as the law of an SLE$_{8/3}$ curve in the complement with respect to $\mathbb{H}$ of $\ball$ (defined by conformal invariance). Moreover, the probability that a standard chordal SLE$_{8/3}$ curve avoids $\ball$ equals $|\psi'(0)|^{5/8}$, where
$\psi$ is the conformal map from $\HH\setminus \ball$ onto $\HH$ which preserves the origin and infinity and has derivative $1$ at infinity, see \cite{LSW_restriction}. It follows that $P_R(z)=|\psi'(0)|^{5/8}L(\psi(F_\ee(z)))$ and $P_R(z,w)=|\psi'(0)|^{5/8}L(\psi(F_\ee(z)),\psi(F_\ee(w)))$. It is easy to see that $\rho\to 0$ as $\ee\to0$ and that $|\psi'(0)|\to 1$, which means that this factor does not affect the leading term in the series for $P_R(z)$ and $P_R(z,w)$.

There is an alternative way to construct $\psi(F_\ee(z))$: it can be written (up to an irrelevant scaling) as $-1/J_R(z)$, where
\[
J_R(z)=-\frac{z}{R}-\frac{R}{z}+\frac{\ee}{R}+\frac{R}{\ee}=\frac{R}{\ee}\left(1-\frac{\ee}{R}\left(\frac{z}{R}+\frac{R}{z}\right)+\frac{\ee^2}{R^2}\right)
\]
is a conformal map from the half disc onto the upper half-plane which maps the origin to infinity and $\ee$ to the origin. 

We see that 
\[
\arg(J_R(z))=\frac{\ee}{R}\Im(J_R(z))+o(\ee)=\frac{\ee}{R}\Im(J(z/R))+o(\ee),
\]
where $J(z)=z+z^{-1}$ is the standard Joukowsky map.
The argument of $-1/J_R(z)$ (which is the same as the argument of $\psi(F_\ee(z))$) is 
\begin{equation}
\label{Jarg}
\pi-\frac{\ee}{R}\Im(J(z/R))+o(\ee).
\end{equation}
Plugging this into \eqref{Lcomplex} gives
\[
P_R(z)=\frac{\ee^2}{4R^2}\left(\Im(J(z/R))\right)^2+o(\ee^2).
\]
Finally, to derive \eqref{PRzw} we recall that $\sigma$ is a conformal invariant and 
\[
\sigma(\psi(F_\ee(z)),\psi(F_\ee(w)))=\sigma(J(z/R),J(w/R))
\]
which together with \eqref{Jarg} gives 
\[
\begin{aligned}
P_R(z,w)&=L(\psi(F_\ee(z)),\psi(F_\ee(w)))(1+o(\ee))
\\
&=\frac{\ee^2}{4R^2}\Im(J(z/R))\Im(J(w/R))G(\sigma(J(z/R),J(w/R)))+o(\ee^2).
\end{aligned}
\]
This completes the proof.
\end{proof}

We can now derive several corollaries of the above formulas.

\begin{corollary}\label{area1}
For $w \in \mathbb{H}$ fixed, let $p_w(z)$ be the probability that $z$ is contained in the SLE$_{8/3}$ bubble with bulk point $w$. It holds that
\begin{equation}\label{apr8.4}
p_w(z)=\frac{\Im(z^{-1})}{ \Im(w^{-1})} G(\sigma(z,w)).
\end{equation}
Moreover, the expected area of the SLE$_{8/3}$ bubble with bulk point $w$ is infinite.
\end{corollary}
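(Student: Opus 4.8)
The plan is to handle the two claims in turn: the formula for $p_w(z)$ follows immediately from Proposition~\ref{formulas}, and the infinitude of the expected area then follows by integrating $p_w$ over $\HH$ and locating the divergence.

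For the formula, recall that the bubble with bulk point $w$ is the $\ee \to 0$ limit of an $\ee$-bubble conditioned to contain $w$. Thus $p_w(z)$ is the limit as $\ee \to 0$ of the conditional probability that $z$ lies in the $\ee$-bubble given that $w$ does. Since the event that both $z$ and $w$ lie in the bubble is contained in the event that $w$ does, this conditional probability is exactly $P(z,w)/P(w)$. Inserting the expansions \eqref{Pz} (evaluated at $w$) and \eqref{Pzw}, the common prefactor $\tfrac14\ee^2\,\Im(w^{-1})$ cancels, the $o(\ee^2)$ remainders vanish in the limit, and one is left with
\[
p_w(z)=\lim_{\ee\to0}\frac{P(z,w)}{P(w)}=\frac{\Im(z^{-1})}{\Im(w^{-1})}\,G(\sigma(z,w)),
\]
which is \eqref{apr8.4}.

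For the area, writing it as $\int_{\HH}\mathbf 1\{z\in\text{bubble}\}\,dA(z)$ and applying Tonelli's theorem (the integrand being nonnegative) gives $\E[\text{area}]=\int_{\HH}p_w(z)\,dA(z)$, so it suffices to show this integral diverges. I would isolate the contribution from $z\to\iy$. In polar coordinates $z=re^{i\theta}$ we have $\Im(z^{-1})=-\sin\theta/r$, while $\sigma(z,w)\to1$; the key input is that $G(\sigma)\to0$ no faster than linearly in $1-\sigma$. From \eqref{G} and the expansion ${}_2F_1(1,4/3;5/3;1-\sigma)=1+\tfrac45(1-\sigma)+O((1-\sigma)^2)$ one finds $G(\sigma)=\tfrac15(1-\sigma)+O((1-\sigma)^2)$, and combining this with the identity $1-\sigma=4yv/((x-u)^2+(y+v)^2)$ from the proof of Theorem~\ref{mainthm}, which is asymptotic to $4yv/r^2$ as $r\to\iy$, yields $p_w(z)\sim c(w)\,\sin^2\theta/r^2$ for a positive constant $c(w)$. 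Since $dA=r\,dr\,d\theta$, the radial integral reduces to $\int^{\iy}r^{-1}\,dr$, which diverges; hence $\E[\text{area}]=\iy$.

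The first claim is routine. The point needing care is the second: one must check that the decay of $G$ at $\sigma=1$ is exactly balanced against the decay of $\Im(z^{-1})$ so that, after the area element, the integrand behaves like $r^{-1}\,dr$ and is genuinely non-integrable at infinity rather than convergent. It is also worth confirming that no other region contributes a divergence. Near the real axis the factor $\Im(z^{-1})$ forces the integrand to vanish, and as $z\to0$ the same expansion (now with $1-\sigma\sim4yv/|w|^2$) shows that $p_w(z)\to\tfrac45\sin^2\theta$ stays bounded, so the integral converges near the origin. Thus the logarithmic divergence at infinity is the sole and decisive obstruction to finiteness.
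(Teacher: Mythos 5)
Your proposal is correct and follows essentially the same route as the paper: the formula is obtained as the $\ee\to 0$ limit of the ratio $P(z,w)/P(w)$ using \eqref{Pz} and \eqref{Pzw}, and the divergence of the area integral comes from the asymptotic $G(\sigma)\asymp r^{-1}$ (equivalently $p_w(z)\asymp \sin^2\theta/r^2$) inside a wedge, yielding a non-integrable $r^{-1}\,dr$ tail. Your explicit expansion of the hypergeometric function and the checks near the origin and the real axis are correct but not needed beyond what the paper records.
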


\begin{proof}
The first part is an immediate consequence of formulas \eqref{Pz} and \eqref{Pzw}. Indeed, for $\ee>0$ the probability that $z$ is inside the $\ee$-bubble conditioned to contain $w$ is given by
\[
\frac{P(z,w)}{P(w)}=\frac{\Im(z^{-1})}{ \Im(w^{-1})} G(\sigma(z,w))+o(\ee)
\]
which in the limit gives \eqref{apr8.4}. By Fubini's theorem, the expected area is given by
 \[
 \int_{\mathbb{H}}p_w(z) \, dA(z)=-\frac{1}{\Im(w^{-1})}\iint_{\mathbb{R}_+ \times [0,\pi]} \sin(\theta) G(\sigma) dr d\theta
 \]
 and this integral diverges since $G(\sigma(re^{i\theta},w)) \asymp r^{-1}$ if $z=re^{i\theta}$ is contained in a wedge. This proves the second claim.
\end{proof}

\begin{corollary}
Let $\mathcal{R}_z$ be the radius of the SLE$_{8/3}$ bubble with bulk point $z$, that is, $\mathcal{R}_z$ is the radius of the smallest disc centered at the origin containing the bubble. The distribution function of $\mathcal{R}_z$ is given by
\begin{equation}
\label{Rdistr}
\mathbb{P} \left\{ \mathcal{R}_z \le r \right\}=\left(1-\frac{|z|^2}{r^2}\right)^2, \quad r \ge |z|.
\end{equation} 
Consequently,
\[
\mathbb{E}[\mathcal{R}_z]=\frac{8}{3}|z|.
\]
\end{corollary}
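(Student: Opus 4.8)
The plan is to read off the distribution function of $\mathcal{R}_z$ as an $\ee\to 0$ limit of conditional probabilities for the $\ee$-bubble, using parts (a) and (c) of Proposition~\ref{formulas}, and then to integrate to obtain the mean. First I would note that for the $\ee$-bubble the event that it is contained in the disc $D_r$ \emph{and} contains $z$ has probability $P_r(z)$ (this is \eqref{PRz} with $R=r$, which is admissible since $r\ge|z|$), while the event that it merely contains $z$ has probability $P(z)$ from \eqref{Pz}. Since the SLE$_{8/3}$ bubble with bulk point $z$ is by definition the $\ee\to 0$ limit of the $\ee$-bubble conditioned to contain $z$, and since $\{\mathcal{R}_z\le r\}$ is exactly the event that the bubble lies inside $\overline{D_r}$, this yields
\[
\mathbb{P}\{\mathcal{R}_z\le r\}=\lim_{\ee\to 0}\frac{P_r(z)}{P(z)}=\frac{r^{-2}\left(\Im(J(z/r))\right)^2}{\left(\Im(1/z)\right)^2},
\]
where $J(z)=z+z^{-1}$ is the Joukowsky map and the $o(\ee^2)$ error terms cancel in the ratio.

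Next I would carry out the short algebraic simplification. Writing $z=x+iy$, one has $\Im(1/z)=-y/|z|^2$ and $\Im(J(z/r))=\Im(z/r+r/z)=y/r-ry/|z|^2=y(|z|^2-r^2)/(r|z|^2)$. Substituting these and cancelling the common factor $y^2/|z|^4$ collapses the expression to $(r^2-|z|^2)^2/r^4=(1-|z|^2/r^2)^2$, which is \eqref{Rdistr}.

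Finally, for the mean I would use $\mathbb{E}[\mathcal{R}_z]=\int_0^\infty \mathbb{P}\{\mathcal{R}_z>r\}\,dr$. Because $\mathcal{R}_z\ge|z|$ almost surely, the integrand equals $1$ on $(0,|z|)$ and equals $1-(1-|z|^2/r^2)^2=2|z|^2/r^2-|z|^4/r^4$ on $(|z|,\infty)$. The two elementary integrals $\int_{|z|}^\infty r^{-2}\,dr=1/|z|$ and $\int_{|z|}^\infty r^{-4}\,dr=1/(3|z|^3)$ then give $|z|+(2|z|-|z|/3)=(8/3)|z|$, as claimed.

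The computations are entirely routine, so the only step that genuinely requires care is the first one: the justification that the distribution function of $\mathcal{R}_z$ is the $\ee\to 0$ limit of the ratio $P_r(z)/P(z)$. This rests on the definition of the bubble with bulk point $z$ as a limit of conditioned $\ee$-bubbles and on interchanging that limit with the event $\{\mathcal{R}_z\le r\}$; the existence of the limiting law was already invoked in Section~\ref{bb}, so I expect this to be the main (and essentially only nontrivial) point, the remainder being bookkeeping.
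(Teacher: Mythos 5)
Your proof is correct and follows essentially the same route as the paper: both identify $\mathbb{P}\{\mathcal{R}_z\le r\}$ as the $\ee\to 0$ limit of the ratio $P_r(z)/P(z)$ from Proposition~\ref{formulas} and simplify using the same expressions for $\Im(1/z)$ and $\Im(J(z/r))$. The only (immaterial) difference is that you compute the mean via the tail formula $\int_0^\infty \mathbb{P}\{\mathcal{R}_z>r\}\,dr$ while the paper integrates $r$ against the density $4(1-|z|^2/r^2)|z|^2/r^3$; both give $(8/3)|z|$.
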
 
\begin{proof}
For fixed $z$ the probability that $\mathcal{R}_z \le r$ is given by 
\[
\frac{P_r(z)}{P(z)}=\frac{\left(\Im(J(z/r))\right)^2}{r^2\left(\Im(z^{-1})\right)^2}+o(\ee).
\]
Let $z=x+iy$. Then  
\[
\Im\left( J\left(\frac{z}{r} \right) \right)=\frac{y(|z|^2-r^2)}{r|z|^2}
\]
and 
\[
\Im\left(\frac{1}{z}\right)=-\frac{y}{|z|^2}.
\]
Using these expressions and passing to the limit with $\ee$ we get  \eqref{Rdistr}.
The expected radius is given by
\[
\int_{|z|}^\infty 4\left(1-\frac{|z|^2}{r^2}\right)\frac{|z|^2}{r^3}r \, dr=\frac{8}{3}|z|.
\]
\end{proof}

\begin{corollary}
Let $z$ and $w$ be two points in $D_R$. The probability that $z$ is inside the SLE$_{8/3}$ bubble with bulk point $w$ which is conditioned to stay inside $D_R$ is given by
\[
p_{w,R}(z)=\frac{\Im(J(z/R))}{\Im(J(w/R))}G(\sigma(J(z/R),J(w/R))),
\]
where $J(z)=z+z^{-1}$.
\end{corollary}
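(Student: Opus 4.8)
The plan is to follow the argument of Corollary~\ref{area1} almost verbatim, but with the $D_R$-constrained one- and two-point estimates \eqref{PRz} and \eqref{PRzw} in place of the unconstrained ones \eqref{Pz} and \eqref{Pzw}. Recall from Section~\ref{bb} that the SLE$_{8/3}$ bubble with bulk point $w$ conditioned to stay inside $D_R$ is, by definition, the $\ee \to 0$ limit of an $\ee$-bubble conditioned simultaneously to contain $w$ and to lie inside $D_R$. Consequently the quantity to be computed is a limit of ordinary conditional probabilities for $\ee$-bubbles, and I would write
\[
p_{w,R}(z)=\lim_{\ee\to 0}\frac{P_R(z,w)}{P_R(w)},
\]
where the numerator is the probability that the $\ee$-bubble lies inside $D_R$ and contains both $z$ and $w$, while the denominator is the probability that it lies inside $D_R$ and contains $w$ (that is, \eqref{PRz} with $z$ replaced by $w$).

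Next I would insert the expansions from parts (c) and (d) of Proposition~\ref{formulas}. Both expressions share the prefactor $\ee^2/(4R^2)$ together with an additive $o(\ee^2)$ remainder, so upon dividing, the prefactors cancel and the remainders contribute only $o(1)$, provided the leading coefficient of the denominator is nonzero. This holds because, as computed in the proof of the radius corollary, $\Im(J(w/R))=v(|w|^2-R^2)/(R|w|^2)\neq 0$ for $w=u+iv$ in the interior of $D_R\cap\HH$. The ratio therefore reduces to
\[
\frac{\Im(J(z/R))\,\Im(J(w/R))\,G(\sigma(J(z/R),J(w/R)))}{\left(\Im(J(w/R))\right)^2}+o(1),
\]
and letting $\ee\to0$ gives exactly the stated formula after cancelling one factor of $\Im(J(w/R))$.

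The only point requiring genuine care---and the one I would regard as the main obstacle---is justifying that this ratio of limits really represents the conditional probability under the limiting bubble measure, i.e.\ that conditioning commutes with the $\ee\to0$ limit. This is precisely the kind of zero-probability conditioning flagged in the introduction and already dealt with for the radius distribution and in Corollary~\ref{area1}. The clean way to handle it is to renormalize by $\ee^{-2}$: by the construction of $\mu^{\textrm{SLE}}$ in Section~\ref{bb}, both $\ee^{-2}P_R(z,w)$ and $\ee^{-2}P_R(w)$ converge to strictly positive and finite limits (the corresponding $\mu^{\textrm{SLE}}$-masses of the constrained events), so the conditional probability is well-defined and equals the ratio of these limits, which is in turn $\lim_{\ee\to0}P_R(z,w)/P_R(w)$. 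Granting this, everything else is the routine cancellation above and no further difficulties arise.
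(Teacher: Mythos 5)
Your argument is exactly the paper's: the authors also define $p_{w,R}(z)$ as $\lim_{\ee\to 0}P_R(z,w)/P_R(w)$ and read off the formula from parts (c) and (d) of Proposition~\ref{formulas}, with the same cancellation of the $\ee^2/(4R^2)$ prefactors. Your additional remarks on the nonvanishing of $\Im(J(w/R))$ and on justifying the zero-probability conditioning via the $\ee^{-2}$ renormalization are correct elaborations of steps the paper leaves implicit.
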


\begin{proof}
By the same argument as before, the probability $p_{w,R}(z)$ is the limit of the ratio $P_R(z,w)/P_R(w)$ as $\ee\to 0$.
\end{proof}

\subsection{Area distribution}
We will now consider an SLE$_{8/3}$ bubble of macroscopic size but with finite area. Formally, it is defined by fixing positive small $\ee, \delta$ and considering an $\ee$-bubble conditioned to intersect the unit circle but not the concentric circle of radius $1+\delta$. We then let $\ee \to 0$ and $\delta \to 0$ in that order so that we obtain a bubble conditioned to have radius $1$; see \cite{GTF} for more details. Let $\mathcal{A}$ be the area of (the hull of) such a bubble. Obviously, $\mathcal{A} \le \pi/2$. By Fubini's theorem we have that
\[
\mathbb{E}[\mathcal{A}]=\int_{\mathbb{D}_+}f(z) \, dA(z),
\]
where $f(z)$ is the probability that the point $z$ is contained in the bubble and $\mathbb{D}_+ = \mathbb{D} \cap \mathbb{H}$. Garban and Trujillo Ferreras used this identity and the one-point function to show that 
\[\mathbb{E}[\mathcal{A}] = \frac{\pi}{10},\] see Lemma~4.1 of \cite{GTF}. Similarly, it holds that
\begin{equation}\label{monster}
\mathbb{E}[\mathcal{A}^2]=\iint_{\mathbb{D}_+ \times \mathbb{D}_+}f(z,w) \, dA(z) \, dA(w),
\end{equation}
where $f(z,w)$ is the probability that $z$ and $w$ are simultaneously contained in the bubble. In this section we will use the two-point function to determine the probability $f(z,w)$. 

As briefly discussed in the introduction, there is some reason to expect, or at least investigate, whether (a constant times) the random variable $\mathcal{A}$ follows the Airy distribution. By properties of the Airy distribution, this would imply the following scale invariant identity:
\[
\frac{\mathbb{E}[\mathcal{A}^2]}{(\mathbb{E}[\mathcal{A}])^2} = \frac{10 }{3 \pi},
\]   
so that one would expect the integral \eqref{monster} to equal $\pi/30$. Since $f(z,w)$ is an explicit function this can of course in principle be checked, but it turns out that it is a rather complicated function that seems difficult to integrate exactly. (We comment on numerical evaluation below.)  
\begin{proposition}\label{apr13.3}
Let $z,w \in \mathbb{D}_+$ and let $f(z,w)$ be the probability that $z=x+iy$ and $w=u+iv$ are simultaneously contained in the hull of the SLE$_{8/3}$ bubble conditioned to touch the unit circle. Then,
\[
f(z,w)=\frac{2y v}{5|z|^2|w|^2}\left(|z|^2+|w|^2-2|z|^2|w|^2-\frac{A}{1-2(xu+yv)+|z|^2|w|^2}\right),
\]
where
\begin{multline*}
A= 
2\sigma(1-|z|^2)(1-|w|^2)(xu-yv-|z|^2|w|^2)
{}_2 F_1(1, 4/3; 5/3; 1 - \sigma_0)  \\+
\sigma_0|z-w|^2(1-|z|^2|w|^2){}_2 F_1(4/3,2; 5/3; 1 - \sigma_0)
\end{multline*}
and
\[
\sigma=\sigma(z,w),\quad\sigma_0=\sigma(J(z),J(w)).
\]
\end{proposition}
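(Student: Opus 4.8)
The plan is to compute $f(z,w)$ as a limit of conditioned $\ee$-bubble probabilities, reducing everything to the explicit formulas already established in Proposition~\ref{formulas}. Following the definition of the radius-$1$ bubble, $f(z,w)$ should be obtained by conditioning the $\ee$-bubble to touch the unit circle (via the $\delta \to 0$ limiting procedure referenced from \cite{GTF}) and then letting $\ee \to 0$. Concretely, I would write $f(z,w)$ as the derivative with respect to $R$, evaluated at $R=1$, of the conditional probability that both points lie in a bubble of radius at most $R$, suitably normalized. More precisely, if $g_R(z,w) = P_R(z,w)/P(\text{radius} \le R)$ denotes the joint containment probability for the bubble conditioned to have radius at most $R$, then the radius-exactly-$1$ law is captured by differentiating in $R$ and setting $R=1$; the leading $\ee^2$ coefficients in \eqref{PRz} and \eqref{PRzw} are exactly the inputs. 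The normalization constant will come from the single-point computation that already produced $\mathbb{E}[\mathcal{A}] = \pi/10$, i.e. the density of the radius variable at $R=1$.

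First I would set $R=1$ in \eqref{PRz} and \eqref{PRzw} and assemble the ratio, so that $f(z,w)$ emerges as $\Im(J(z))\,\Im(J(w))\,G(\sigma(J(z),J(w)))$ times an explicit normalizing factor determined by the radius density at $1$. Using the expression $\Im(J(z)) = y(|z|^2-1)/|z|^2$ for $|z|<1$ (the same computation as in the radius-distribution corollary, with $r=1$), the prefactor $\Im(J(z))\Im(J(w))$ becomes $yv(1-|z|^2)(1-|w|^2)/(|z|^2|w|^2)$ up to sign. The factor $2/5$ should appear from combining the $1/4$ coefficients in Proposition~\ref{formulas} with the Garban--Trujillo Ferreras normalization (the $5/8$ restriction exponent enters through the relation $\mu^{\textrm{SLE}} = (5/8)\mu^{\textrm{B}}$ and the $\pi/10$ first moment). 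I would fix this constant by demanding consistency with the one-point function $f(z)$ whose integral gives $\pi/10$, which pins it down unambiguously.

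The main obstacle will be the explicit evaluation of $G(\sigma(J(z),J(w)))$ and its rewriting in the closed form stated. Here $G(t) = 1 - t\,{}_2F_1(1,4/3;5/3;1-t)$ from \eqref{G}, evaluated at $t = \sigma_0 = \sigma(J(z),J(w))$. The work is to (i) compute $\sigma_0$ explicitly in terms of $x,y,u,v$ using $\sigma(a,b) = |a-b|^2/|a-\bar b|^2$ and the Joukowsky images $J(z)=z+1/z$, which after clearing denominators produces the rational expression with numerator involving $1-2(xu+yv)+|z|^2|w|^2$; and (ii) expand the product $\Im(J(z))\Im(J(w))\,G(\sigma_0)$ and collect terms so that the $1$ in $G$ contributes the polynomial part $|z|^2+|w|^2 - 2|z|^2|w|^2$ while the $-\sigma_0\,{}_2F_1(1,4/3;5/3;1-\sigma_0)$ term contributes the $A$-piece. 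The appearance of two distinct hypergeometric functions, ${}_2F_1(1,4/3;5/3;1-\sigma_0)$ and ${}_2F_1(4/3,2;5/3;1-\sigma_0)$, signals that a contiguous-relation or derivative identity for ${}_2F_1$ is needed to split the combination correctly; I expect this algebraic bookkeeping, rather than any conceptual difficulty, to be the technical crux.

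I would organize the final write-up so that the conformal-invariance reduction and the normalization are dispatched quickly by citing Proposition~\ref{formulas} and the radius-distribution corollary, and then devote the bulk to the hypergeometric simplification, verifying at the end that both limiting cases (collapsing $z,w$ to a single point, and sending one point to the unit circle) reproduce the expected degenerations as a consistency check.
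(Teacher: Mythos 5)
Your first paragraph sets up the right limit---condition on radius in $[1,1+\delta]$, take $\ee\to0$ and then $\delta\to0$---and this is exactly the paper's route: the answer is the ratio of the $R$-derivatives at $R=1$ of the $\ee^2$-coefficients, namely $\frac{4}{5}R^3\partial_R\tilde P_R(z,w)\big|_{R=1}$, where $\tilde P_R(z,w)$ is the nontrivial factor in \eqref{PRzw} and the $5/4$ comes from \eqref{aug4.1}. But your second paragraph then abandons this and computes a different object: you propose to \emph{evaluate} \eqref{PRzw} at $R=1$ and multiply by a normalizing constant, so that $f(z,w)$ would equal $c\,\Im(J(z))\,\Im(J(w))\,G(\sigma_0)$. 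That is not the conditional density given radius exactly $1$; you must apply $\partial_R$ to the whole expression, and the derivative hits $\Im(J(z/R))$, the prefactor $R^{-2}$, \emph{and} $G(\sigma(J(z/R),J(w/R)))$ via the chain rule. Two concrete symptoms that the shortcut fails: (i) the polynomial part of your candidate is proportional to $(1-|z|^2)(1-|w|^2)$, whereas the proposition has $|z|^2+|w|^2-2|z|^2|w|^2$; (ii) in the one-point specialization your recipe gives $f(z)\propto y^2(1-|z|^2)^2/|z|^4$, which is not even integrable over $\mathbb{D}_+$ near the origin, while the correct derivative gives $f(z)=\tfrac{4}{5}\,y^2(1-|z|^2)/|z|^2$, whose integral is exactly $\pi/10$.

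Relatedly, your hope that a contiguous relation will ``split'' $G(\sigma_0)$ into the two hypergeometric terms of $A$ cannot work: any function of $\Im J(z)$, $\Im J(w)$ and $\sigma_0$ alone can never produce the coefficient $2\sigma(1-|z|^2)(1-|w|^2)(xu-yv-|z|^2|w|^2)$, which involves $\sigma=\sigma(z,w)$ and not merely $\sigma_0$. The second hypergeometric ${}_2F_1(4/3,2;5/3;1-\sigma_0)$ enters through $G'(\sigma_0)\cdot\partial_R\,\sigma\!\left(J(z/R),J(w/R)\right)\big|_{R=1}$ (using, e.g., $\frac{d}{ds}\left[s\,{}_2F_1(1,4/3;5/3;s)\right]={}_2F_1(4/3,2;5/3;s)$); it is created by the $R$-derivative, not by rewriting $G$. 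A smaller point: the object to differentiate is not $P_R(z,w)/P(\mathrm{radius}\le R)$ (the derivative of a ratio), but rather one wants the ratio of the two $R$-derivatives; your phrase ``suitably normalized'' would need to be resolved in that direction. Once ``evaluate at $R=1$'' is replaced by ``differentiate in $R$ at $R=1$,'' the remainder is the long but routine computation the paper alludes to.
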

\begin{proof}
Recall the notation from the previous subsection. We have to compute the double limit
\[
\lim_{\delta\to 0}\lim_{\ee\to 0}\frac{P_{R+\delta}(z,w)-P_R(z,w)}{P_{R+\delta}-P_R},
\]
where 
\begin{equation}\label{aug4.1}
P_{R+\delta}=1-\frac{5}{8}\frac{\ee^2}{R^2}+\frac{5}{4}\frac{\ee^2\delta}{R^3}+O(\ee^4)
\end{equation}
is the probability that an $\ee$-bubble stays inside the disc of radius $R+\delta, \, \delta \ge 0$.

By \eqref{PRzw} we can compute the inner limit and get
\[
\lim_{\delta\to 0}\frac{\tilde P_{R+\delta}(z,w)-\tilde P_R(z,w)}{5\delta/4R^3}=\frac{4R^3\partial_R \tilde P_R(z,w)}{5}
\]
where 
\[
\tilde P_R(z,w)=\Im \left(J \left(\frac{z}{R} \right) \right)\Im \left(J \left(\frac{w}{R} \right) \right)G\left(\sigma \left(J \left(\frac{z}{R} \right),J \left(\frac{w}{R} \right)\right)\right)/4R^2
\] 
is the non-trivial part of \eqref{PRzw}.

The only thing left is to compute the partial derivative of $\tilde P_R(z,w)$ which is a very long but straight-forward computation. We are interested in the value for $R=1$ which slightly simplifies the computations. This gives the stated formula and completes the proof.
\end{proof}
Despite some effort, we have not been able to compute the integral \eqref{monster}. Numerical evaluation, however, gives a result which is within $2-3$ percent of the predicted value of $\pi/30$. (We used several of Mathematica's integration schemes, including the Adaptive Monte Carlo method.) This gives some weak support for the hypothesis that $\mathcal{A}$ follows the Airy distribution. But, since we do not know how to estimate the error in the numerical integration the result of course has to be taken with a large grain of salt.
 
We end the section with the following remarks. It is known (see \cite{GTF}) that $2\mathbb{E}[\mathcal{A}]=\mathbb{E}[\mathcal{A}_{\mathbb{H}}]$, where $\mathcal{A}_{\mathbb{H}}$ is the area of the hull of a half-plane Brownian bridge of time-length $1$. (This process can be constructed by letting the $x$-coordinate evolve according to a one-dimensional Brownian bridge of time-length $1$ and the $y$-coordinate according to an independent one-dimensional Brownian excursion of time-length $1$.) This in turn gives the expected value of the area of the hull of a two-dimensional Brownian loop of time-length $1$, given by considering $Z_t-tZ_1$ where $Z$ is a planar Brownian motion. The equivalence no longer holds for the second moments, but arguing as in the proof of Lemma~3.1 of \cite{GTF} we can see that 
\[\mathbb{E}\left[\left( \frac{\mathcal{A}_{\mathbb{H}}}{\mathcal{R}}\right)^2\right]=2\mathbb{E}[\mathcal{A}^2], \] 
where $\mathcal{R}$ is the random radius of the half-plane Brownian bridge. 

\subsection{Bubbles and multiple SLE$_{8/3}$}
It turns out that very large bubbles are close to a certain restriction measure and as a consequence of this we can obtain a simple proof (and generalization) of a prediction by Gamsa and Cardy \cite{GC}. In what follows, we take the limits in the sense of convergence in law with respect to the Hausdorff metric on closed subsets of $\overline{\mathbb{H}} \cup \{ \infty \}$. 
\begin{proposition}\label{apr13.1}
Let $\mu_y$, $y>0$, be the law of the hull of an SLE$_{8/3}$ bubble in $\mathbb{H}$ with bulk point $iy$. As $y \to \infty$ the measures $\mu_y$ converge weakly with respect to Hausdorff distance to the chordal restriction measure with restriction exponent $2$. 

Similarly, the law $\nu_R$ of the hull of an SLE$_{8/3}$ bubble in $\mathbb{H}$ conditioned to have radius at least $R$ converges in the same sense, as $R \to \infty$, to the chordal restriction measure with restriction exponent $2$.
\end{proposition}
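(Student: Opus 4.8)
The plan is to identify the limit through the defining property of chordal restriction measures: a probability law on hulls joining $0$ to $\iy$ in $\HH$ is the restriction measure of exponent $\alpha$ if and only if $\Prob\{K\cap A=\eset\}=\Phi_A'(0)^{\alpha}$ for every hull $A$ bounded away from $0$, where $\Phi_A\colon\HH\setminus A\to\HH$ is the hydrodynamically normalized conformal map fixing $0$ and $\iy$ (see \cite{LSW_restriction}). Since the closed subsets of the compact set $\overline{\HH}\cup\{\iy\}$ form a compact metric space under the Hausdorff distance, the families $\{\mu_y\}$ and $\{\nu_R\}$ are automatically tight, so it suffices to show that every subsequential limit $\mu$ satisfies $\mu\{K\cap A=\eset\}=\Phi_A'(0)^2$ for all such $A$. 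As $y\to\iy$ (resp.\ $R\to\iy$) the bubble must enclose a point escaping to $\iy$ (resp.\ grow without bound), so the limiting hull joins $0$ to $\iy$ through the point at infinity, which is the correct chordal setting.

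For the first statement I would compute $\lim_{y\to\iy}\mu_{iy}\{K\cap A=\eset\}$ directly. By the defining limit of the bubble measure, $\mu_{iy}\{K\cap A=\eset\}=\lim_{\ee\to0}P_{\ee}^{\textrm{SLE}}(\textrm{avoid }A\mid\textrm{contain }iy)$. Applying the M\"obius map $F_\ee$ turns the $\ee$-bubble curve into a standard chordal SLE$_{8/3}$ from $0$ to $\iy$, so that ``contain $iy$'' becomes the left-passage event at $F_\ee(iy)$ and ``avoid $A$'' becomes ``avoid $A_\ee:=F_\ee(A)$''. The special restriction property (exponent $5/8$) then gives
\[
P_{\ee}^{\textrm{SLE}}(\textrm{avoid }A,\ \textrm{contain }iy)=\Psi'(0)^{5/8}\,L\big(\Psi(F_\ee(iy))\big),
\]
where $\Psi=\Psi_{A_\ee}$ is the hydrodynamic map of $\HH\setminus A_\ee$ fixing $0,\iy$ and $L$ is the one-point function \eqref{Lcomplex}; the denominator is $P_{\ee}^{\textrm{SLE}}(\textrm{contain }iy)=L(F_\ee(iy))$. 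Now $F_\ee(z)=-1-\ee/z+O(\ee^2)$, so $F_\ee(iy)$ and $A_\ee$ both concentrate near $-1$ at scale $\ee$; in the zoomed coordinate $\zeta=(z+1)/\ee$ the point $F_\ee(iy)$ tends to $i/y$ and $A_\ee$ tends to the inverted hull $\hat A=\{-1/a:a\in A\}$. Using the second-order vanishing $L(z)=\tfrac14(\Im(z+1))^2+o(|z+1|^2)$ near $-1$, the fact that the rescaled map $\Psi_{A_\ee}$ converges to the hydrodynamic map $g_{\hat A}$ of $\HH\setminus\hat A$, and $\Psi'(0)^{5/8}\to1$ (since $A_\ee\to\{-1\}$), one obtains
\[
\mu_{iy}\{K\cap A=\eset\}=y^2\big(\Im g_{\hat A}(i/y)\big)^2.
\]
Letting $y\to\iy$ and expanding $g_{\hat A}$ at $0$ gives $(g_{\hat A}'(0))^2$. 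Finally the involution $\iota(z)=-1/z$ relates $g_{\hat A}$ to $\Phi_A$, and a short computation yields $g_{\hat A}'(0)=\Phi_A'(0)$, so the limit is $\Phi_A'(0)^2$, as required.

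For $\nu_R$ I would argue in the same way, replacing ``contain $iy$'' by ``radius $\ge R$''. Here the denominator is $P_{\ee}^{\textrm{SLE}}(\textrm{radius}\ge R)=1-P_R=\tfrac58\ee^2/R^2+O(\ee^4)$ by \eqref{aug4.1}, and conditioning on avoiding $A$ and applying restriction as above expresses the joint probability through the hitting probability of $\Psi_{A_\ee}(\ball)$, where $\ball=F_\ee(C_R)$ is a semicircle of radius $\sim\ee/R$ near $-1$. In the same zoomed coordinate $\ball$ becomes a semicircle of radius $1/R$ centred at $0$ and $\Psi_{A_\ee}$ again converges to $g_{\hat A}$; carrying out the $\ee\to0$ limit and then letting $R\to\iy$ (so that this semicircle shrinks to the point $0$) produces once more $(g_{\hat A}'(0))^2=\Phi_A'(0)^2$. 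By the characterization of restriction measures both limits therefore equal the restriction measure of exponent $2$. As a consistency check, Corollary~\ref{area1} with $w=iy$ gives $p_{iy}(z)\to\tfrac45\sin^2(\arg z)$ as $y\to\iy$, which is precisely the interior one-point function of that measure.

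The main obstacle is the conformal ``zooming'' estimate together with the interchange of the limits $\ee\to0$ and $y\to\iy$ (resp.\ $R\to\iy$): one must show, with uniform control, that the rescaled maps $\zeta\mapsto(\Psi_{A_\ee}(-1+\ee\zeta)+1)/\ee$ converge to $g_{\hat A}$ on the relevant region and that the one-point and hitting asymptotics near $-1$ hold uniformly enough to pass to the limit in the ratio. A secondary technical point is upgrading convergence of avoidance probabilities to weak convergence of the laws: since $\{K:K\cap A=\eset\}$ is open in the Hausdorff topology, the portmanteau theorem gives one inequality, and sandwiching $A$ between slightly smaller and larger hulls (using the continuity $\Phi'_{A_\delta}(0)\to\Phi_A'(0)$ as $\delta\to0$) shows these are $\mu$-continuity sets, which forces the common value $\Phi_A'(0)^2$ for every subsequential limit and hence convergence of the full families.
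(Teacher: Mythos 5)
Your argument is correct and follows essentially the same route as the paper: tightness by compactness, identification of every subsequential limit through the avoidance probabilities $\Prob\{K\cap A=\eset\}$ computed via the restriction property (exponent $5/8$) together with the quadratic vanishing of the one-point function near $-1$, and then the characterization of the restriction measure $\mathbb{P}_2$ from \cite{LSW_restriction}. The only real difference is that where you zoom in at $-1$ and pass to the inverted hull $\hat A$ (the uniform-convergence issue you flag as the main obstacle), the paper instead writes $\phi_{A_\ee}$ exactly as $C_\ee\,\frac{z}{\phi_A(\ee)-z}\circ\phi_A\circ F_\ee^{-1}$, which turns the inner limit $\ee\to 0$ into an explicit closed-form computation and dispenses with the rescaling estimate.
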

\begin{proof}
We start with the first assertion. Subsequential weak limits exist by compactness. Let $\mu_{\infty}$ be any subsequential weak limit of the sequence of measures $\mu_y$. We first show that any sample $K$ from $\mu_{\infty}$ of satisfies
\begin{equation} \label{p2}
\mu_{\infty}(K \cap A = \emptyset) = \phi'_A(0)^2,
\end{equation}
where $A$ is a smooth hull bounded away from $0$ and $\phi_A: \mathbb{H} \setminus A \to \mathbb{H}$ is the conformal map such that $\phi_A(0)=0, \, \phi_A(z)/z \to 1$ as $z \to \infty$. Indeed, let $A$ be given as above and let $\mathcal{L}(z)$ be the event that the standard chordal SLE$_{8/3}$ path $\gamma$ passes to the left of $z$. (Assume also that $\ee>0$ is sufficiently small so that $A$ is bounded away from $[0,\ee]$.) By the restriction property, the probability that SLE$_{8/3}$ from $0$ to $\ee$ conditioned to pass to the left of $iy$ avoids $A$ equals
\begin{align*}
\frac{\mathbb{P}\, \{\mathcal{L}(F_\ee(iy)), \gamma \cap F_\ee(A) = \emptyset\}}{L(F_\ee(iy))} & = 
\frac{\mathbb{P} \, \{\mathcal{L}(F_\ee(iy))\mid \gamma \cap F_\ee(A) = \emptyset\} \mathbb{P} \, \{\gamma \cap F_\ee(A) = \emptyset\}}{L(F_\ee(iy))} \\
&= \phi'_{A_\ee}(0)^{5/8}\frac{L(\phi_{A_\ee}(F_\ee(iy)))}{\ee^2 y^{-2}/4 + O(\ee^4y^{-4}) },
\end{align*}
where $A_{\ee} = F_{\ee}(A)$. We have that
\[
\phi_{A_{\ee}}(z) = C_{\ee} \frac{z}{\phi_A(\ee)-z} \circ \phi_A \circ F^{-1}_\ee,
\]
where $C_{\ee} = \phi_A(\ee)/(\ee \phi_A'(\ee))$. Consequently, $\phi_{A_{\ee}}'(0) \to 1$ as $\ee \to 0$. We can also see that  
\begin{align*}
\lim_{y \to \infty}\lim_{\ee \to 0}\frac{L(\phi_{A_\ee}(F_\ee(iy)))}{\ee^2 y^{-2}/4} & = \lim_{y \to \infty} \lim_{\ee \to 0}\frac{\phi_A(\ee)^2\Im(-\phi_A(iy)^{-1})^2/4}{\ee^2 y^{-2}/4}\\
& = \phi_A'(0)^2,
\end{align*}
and we have obtained \eqref{p2}.

It is also easy to check that $\mu_{\infty}$ is supported on sets $K$ such that: $K$ is unbounded, $\overline{K} \cap \mathbb{R} = 0$, and $\mathbb{C} \setminus \overline{K}$ is connected. These properties together with \eqref{p2} imply that $\mu_{\infty}$ is the unique chordal restriction measure with restriction exponent $2$ ($\mathbb{P}_{2}$), see Proposition~3.3 of \cite{LSW_restriction}.

For the second assertion, write $\nu_{\infty}$ for any subsequential weak limit of the measures $\nu_R$; such weak limits again exist by compactness. We have that 
\[
\nu_{\infty}(K \cap A = \emptyset)=\lim_{R \to \infty} \lim_{\ee \to 0}\phi_{A_\ee}'(0)^{5/8}\frac{\mathbb{P}\, \{\gamma \cap \phi_{A_{\ee}}(F_{\ee}(C_R)) \neq \emptyset\}}{\mathbb{P}\, \{\gamma \cap F_{\ee}(C_R) \neq \emptyset\}},
\]
where $C_R$ is the circle of radius $R$ around the origin. By the computations in Proposition~\ref{formulas}, \eqref{aug4.1}, and the normalization of $\phi_A$ at infinity it follows that this limit equals $\phi_A'(0)^2$.
\end{proof}
Recall that $\mathbb{P}_{2}$ is the (two sided) restriction measure with restriction exponent $2$.
Let us make a few remarks. 
\vskip 12pt
\begin{itemize}\itemsep12pt

\item{It follows directly from the additive property of restriction exponents that samples from the limiting measure $\mathbb{P}_2$ are the same as those from the law of the hull of two independent Brownian half-plane excursions, see \cite{LSW_restriction}.}
\item{A sample from $\mathbb{P}_2$ can also be interpreted as the hull of a system of two \emph{standard} chordal SLE$_{8/3}$ paths conditioned to not intersect or equivalently two commuting standard chordal SLE$_{8/3}$ paths, see \cite{WW04} and \cite{LSW_SAW}.}    
\item{In view of Proposition~\ref{apr13.1} it is natural to interpret $\mathbb{P}_2$ as an infinite SLE$_{8/3}$ bubble, ``pinned'' at $0$ and $\infty$. By mapping $\infty$ to some $x \neq 0$ and fixing $0$ this is a two-pinned SLE bubble in the language of \cite{SW10}.}
\item{A fourth interpretation of $\mathbb{P}_2$ is that of the hull of a Brownian half-plane excursion conditioned to not have cut-points, see \cite{WW04} p.24 for a further discussion. Vir\'ag has conjectured that a suitable limit of the law of a so-called \emph{Brownian Bead} converges to $\mathbb{P}_2$, see \cite{Virag}. (Very roughly speaking, a Brownian excursion can be viewed as a ``necklace'' of Brownian beads joined at the cutpoints.) By the equivalence of (hulls of) SLE$_{8/3}$ and Brownian bubbles, Proposition~\ref{apr13.1} can be viewed as a verification of the pinned Brownian bubble version of this statement, at least on the level of hulls. (However, it is not clear whether there is a direct relation between pinned Brownian bubbles and Brownian beads.) }
\item{We mention finally that $\mathbb{P}_2$ has been predicted to be the scaling limit of the half-plane infinite self-avoiding polygon, see \cite{LSW_SAW}.}
\end{itemize}
\vskip 12pt

From Proposition~\ref{apr13.1} and the work in Section~\ref{bb} we now get the following result which in part was predicted by Gamsa and Cardy. It is interesting to note that Gamsa and Cardy found the one-point function \eqref{CG} by deriving (and solving) a \emph{third}-order PDE by ``fusing'' the two SLE paths. In particular, the equation is not a so-called level two degeneracy equation that would easily translate to SLE language by arguing as in the proof of Theorem~\ref{mainthm}. 

Given two curves $\gamma$ and $\gamma'$ in $\mathbb{H}$ connecting $0$ and $\infty$, we define the hull of the system of these curves to be the closure of the union of the components of $\mathbb{H} \setminus (\gamma \cup \gamma')$ which do not have either of the positive or negative reals as part of their boundary. For example, if the curves only intersect at $0$ and $\infty$, then the hull is the closure of the region between the two curves.
\begin{proposition}\label{gamsa-cardy}
Let $z, w \in \mathbb{H}$. The probability that $z$ and $w$ are both contained in the hull of a system of two commuting SLE$_{8/3}$ paths equals
\begin{equation}\label{aug3.1}
-\frac{2}{5}\left(\Im (z) \, \Im ( w^{-1}) + \Im (z^{-1}) \, \Im (w) \right)G(\sigma(z,w)),
\end{equation}
where $G$ was defined in \eqref{G} and $\sigma=|z-w|^2/|z-\overline{w}|^2$.  
In particular, the probability that the point $z$ is contained in the hull of a system of two commuting SLE$_{8/3}$ paths equals
\begin{equation}\label{CG}
-\frac{4}{5}\Im(z) \Im (z^{-1}).
\end{equation}
\end{proposition}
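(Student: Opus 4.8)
The plan is to obtain formula \eqref{aug3.1} by taking the $y \to \infty$ limit of the two-point bubble connectivity for the SLE$_{8/3}$ bubble with bulk point $iy$, using Proposition~\ref{apr13.1} to identify this limit with the desired quantity for $\mathbb{P}_2$. Concretely, the probability that both $z$ and $w$ lie in the hull of a system of two commuting SLE$_{8/3}$ paths is, by the last assertion of Proposition~\ref{apr13.1} together with the stated interpretation of $\mathbb{P}_2$ as the hull of two commuting paths, exactly $\mu_\infty(\{z,w\} \subset K)$, and by weak convergence this equals $\lim_{y \to \infty} \mu_y(\{z,w\} \subset K)$. So the computation reduces to taking a limit of a conditional probability for the bubble with bulk point $iy$.

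First I would write the relevant finite-$y$ quantity. For the bubble with bulk point $iy$, the probability that $z$ and $w$ are both contained is, by the construction in Section~\ref{bb} and the proof of Corollary~\ref{area1}, the $\ee \to 0$ limit of the ratio of $\ee$-bubble probabilities. Using \eqref{Pzw} and \eqref{Pz} with the three points $z$, $w$, $iy$, the natural object is the probability that $z$ and $w$ both lie in the $\ee$-bubble conditioned to contain $iy$; passing to the limit as in Corollary~\ref{area1} gives an explicit expression of the form
\[
\frac{\Im(z^{-1})\,\Im(w^{-1})\,\Im((iy)^{-1})^{-1}\cdots}{\cdots}\, G(\sigma(z,w))\,\frac{G(\cdots)}{G(\cdots)},
\]
i.e.\ a product of the two-point function $G(\sigma(z,w))$ with correction factors built from $\sigma(z,iy)$ and $\sigma(w,iy)$ coming from the three-point connectivity. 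The key input is that $\sigma(z,w)$ is Möbius invariant and does not depend on $y$, so the $G(\sigma(z,w))$ factor survives the limit untouched; only the geometric prefactors and the bulk-point correction factors need to be expanded.

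Next I would carry out the $y \to \infty$ asymptotics of those prefactors. As $y \to \infty$ one has $\sigma(z,iy) \to 1$ and $\sigma(w,iy) \to 1$, so the associated $G$-corrections tend to their boundary value, and $\Im((iy)^{-1}) = -1/y \to 0$; the normalization built into conditioning on containing $iy$ (a probability of order $y^{-2}$, as in the $\ee^2 y^{-2}/4$ term appearing in the proof of Proposition~\ref{apr13.1}) is exactly what produces a finite nonzero limit. Tracking the leading-order terms, the symmetric combination $\Im(z)\,\Im(w^{-1}) + \Im(z^{-1})\,\Im(w)$ should emerge from expanding $\Im(J(\cdot))$-type factors or directly from the symmetric structure of the two-point function around the faraway bulk point. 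I would fix the overall constant $-2/5$ either by matching against the known restriction exponent $5/8$ normalization (the factor $5/8$ versus $2$ appearing throughout Section~\ref{bb}) or, more cleanly, by specializing to the collapsed case and matching the one-point formula. Finally, to get \eqref{CG} I would collapse $w \to z$: since $G(\sigma) \to G(0) = 1$ as $w \to z$, the symmetric prefactor becomes $2\,\Im(z)\,\Im(z^{-1})$, yielding $-\tfrac{4}{5}\Im(z)\,\Im(z^{-1})$, which also provides an independent check of the constant.

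The main obstacle I anticipate is justifying the interchange of the $\ee \to 0$ and $y \to \infty$ limits with the weak convergence of Proposition~\ref{apr13.1}---that is, making rigorous that $\mu_\infty(\{z,w\}\subset K) = \lim_{y\to\infty}\mu_y(\{z,w\}\subset K)$. The containment event $\{z,w \subset K\}$ is not a continuous functional on closed sets for the Hausdorff topology (small perturbations of $K$ can move an interior point to the boundary), so weak convergence does not automatically transfer it. I expect this to require either a boundary-regularity argument showing the limiting measure assigns zero mass to sets whose boundary passes through a fixed interior point, or a direct uniform-integrability estimate to push the finite-$y$ connectivity formula through the limit, paralleling the uniform integrability argument used at the end of the proof of Theorem~\ref{mainthm}. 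The algebraic asymptotics, by contrast, are routine once the correct normalization is in place.
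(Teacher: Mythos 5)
There is a genuine gap in your route to the two-point formula \eqref{aug3.1}. You propose to compute the probability that $z$ and $w$ both lie in the bubble with bulk point $iy$ and then let $y\to\infty$. But that finite-$y$ quantity is $\lim_{\ee\to 0} P(z,w,iy)/P(iy)$, where $P(z,w,iy)$ is the probability that \emph{three} points lie in the $\ee$-bubble, i.e.\ it requires the three-point left-passage function for chordal SLE$_{8/3}$. No such function is available: Theorem~\ref{mainthm} and Proposition~\ref{formulas} only give one- and two-point data, and the paper stresses that even the two-point correlation is one of very few known explicitly. Your placeholder ``correction factors built from $\sigma(z,iy)$ and $\sigma(w,iy)$ coming from the three-point connectivity'' is precisely the missing object; there is no reason to expect it to factorize into two-point invariants, and nothing in the paper supplies it. This is why the paper switches conditionings for the two-point statement: it uses the \emph{second} construction of $\mathbb{P}_2$ in Proposition~\ref{apr13.1} (bubbles conditioned to have radius at least $R$) and computes
\[
\lim_{R \to \infty}\lim_{\ee \to 0} \frac{P(z,w)-P_R(z,w)}{\mathbb{P}\{ \gamma \cap F_\ee (C_R) \neq \emptyset\}},
\]
where the numerator is the probability that $z,w$ are both in the bubble \emph{and} the bubble exits $D_R$. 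This needs only \eqref{Pzw} and \eqref{PRzw}, i.e.\ only the two-point function, because the radius event is absorbed by the restriction property and the Joukowsky map rather than by an extra marked bulk point. The $R\to\infty$ expansion of $\Im(J(z/R))\Im(J(w/R))G(\sigma(J(z/R),J(w/R)))$ against the normalization $5R^{-2}/2$ then produces the symmetric combination in \eqref{aug3.1}.

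Your derivation of the one-point formula \eqref{CG} by collapsing $w\to z$ in \eqref{aug3.1} is fine (the paper instead gets it directly from $\lim_{y\to\infty}p_{iy}(z)$ via \eqref{apr8.4}, which is the one place where the bulk-point-at-infinity construction does suffice, since only the two-point function is needed there). Your concern about the containment event not being Hausdorff-continuous is a fair technical point, but it is secondary to the main problem: as written, the two-point part of your argument cannot be carried out with the tools in the paper.
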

\begin{remark}
As a consequence of the above result the probability that the point $z$ is in the hull of two commuting SLE$_{8/3}$ paths, but $w$ is not, equals
\[
-\frac{4}{5}\Im(z) \Im (z^{-1})+\frac{2}{5}\left(\Im (z) \, \Im ( w^{-1}) + \Im (z^{-1}) \, \Im (w) \right)G(\sigma(z,w)).
\]
Indeed, this is simply the difference between the expressions \eqref{CG} and \eqref{aug3.1}.
 
\end{remark}
\begin{proof}[Proof of Proposition~\ref{gamsa-cardy}]
We will keep the notation from Proposition~\ref{apr13.1} and Proposition~\ref{formulas}. Then in view of the second part of Proposition~\ref{apr13.1}, and the remarks following its proof, we find the desired probability by computing the limit
\begin{multline*}
\lim_{R \to \infty}\lim_{\ee \to 0} \frac{P(z,w)-P_R(z,w)}{\mathbb{P}\, \{ \gamma \cap F_\ee (C_R) \neq \emptyset\}}\\
=\lim_{R \to \infty} \frac{\Im(z^{-1}) \Im (w^{-1})G(\sigma(z,w)) - R^{-2}\Im(J(z/R)) \Im(J(w/R)) G(\sigma(J(z/R), J(w/R)))}{5R^{-2}/2},
\end{multline*}
where we used the expressions from Proposition~\ref{formulas}. Recalling that $J(z)=z+z^{-1}$ and $\sigma(z,w)=|z-w|^2/|z-\overline{w}|^2$, \eqref{aug3.1} follows.

For the one-point function, let us use the other construction of $\mathbb{P}_2$. The first part of Proposition~\ref{apr13.1} (and the remarks following its proof) shows that the one-point function is given by 
\[
\lim_{y \to \infty}p_{iy}(z)=-\frac{4}{5}\Im(z) \Im (z^{-1})=\frac{4}{5}\sin^2(\arg(z)),
\] 
and so \eqref{CG} follows directly from \eqref{apr8.4}.
\end{proof}
\begin{remark}
It follows from the one-point formula that the probability that a point $z$ is inside the hull of a system of two \emph{independent} SLE$_{8/3}$ curves equals $\sin^2(\arg(z))/2$. This restriction measure has restriction exponent $\alpha=5/4$. For $\alpha=5/8$, corresponding to a single SLE$_{8/3}$ curve, the probability that $z$ is on the path is zero. We may however note the following. Consider the probability $f_{\ee}(z)$ that the standard chordal SLE$_{8/3}$ path passes between the two points $z\pm \epsilon \eta$, where $z \in \mathbb{H}$ and $|\eta|=1$. Then from the two-point formula, which can be derived from Theorem~\ref{mainthm}, we readily get
\begin{equation}\label{apr6.1}
\lim_{\epsilon \to 0} \epsilon^{-2/3}f_{\epsilon}(z)=c_0 \, (\Im z)^{-2/3}\sin^2(\arg z),
\end{equation}
where $c_0=\Gamma(2/3)\Gamma(5/3)/(2 \Gamma(4/3))$; we recover the so-called Green function for SLE$_{8/3}$ in $\mathbb{H}$ with a multiplicative constant. (The Green function can be defined by the above limit with $f_{\epsilon}(z)$ replaced by the probability that the conformal radius of the complement of the (whole) curve in $\mathbb{H}$ with respect to $z$ is at most $2\epsilon$, see, e.g., \cite{LPC}.)
\end{remark}
\bibliography{sle} 
\bibliographystyle{abbrv}
\end{document}